\documentclass{article}
\relpenalty=10000 
\binoppenalty=10000 

\usepackage[english]{babel} 
\usepackage[utf8]{inputenc}
\usepackage{mathtools}
\usepackage{amssymb}
\usepackage{amsthm}

\usepackage{dirtytalk}
\usepackage{xcolor}
\usepackage{framed}
\definecolor{shadecolor}{RGB}{153,204,255}

\newtheorem{theorem}{Theorem}[section]
\newtheorem{definition}[theorem]{Definition}

\newtheorem{proposition}[theorem]{Proposition}

\theoremstyle{remark}


\newcommand{\defhigh}[1]{\textsc{#1}}

\newcommand{\redsolovay}[1][\le]{\ensuremath{{#1}_{\mathrm{S}} } }

\newcommand{\redsolovayzweia}[1][\le]{\ensuremath{{#1}_{\mathrm{S}}^{\mathrm{2a}} } }


\title{Solovay reducibility implies S2a-reducibility}

\author{Ivan Titov}

\begin{document}

\maketitle

\begin{abstract}
The original notion of Solovay reducibility was introduced by Robert M.\ Solovay~\cite{Solovay-1975} in 1975 as a measure of relative randomness. 

The {S2a}-reducibility introduced by Xizhong Zheng and Robert Rettinger~\cite{Zheng-Rettinger-2004} in 2004 is a modification of Solovay reducibility suitable for computably approximable (c.a.) reals.

We demonstrate that Solovay reducibility implies {S2a}-reducibility on the set of c.a.\ reals, even with the same constant, but not vice versa.
\end{abstract}

\section{Introduction and background}
We assume the reader to be familiar with the basic concepts and results of algorithmic randomness. Our notation is standard. Unexplained notation can be found in Downey and Hirschfeldt  \cite{Downey-Hirschfeldt-2010}. As it is standard in the field, all rational and real numbers are meant to be in the unit interval $[0,1)$, unless stated otherwise.

We start by reviewing some central concepts and results that will be used subsequently.

\begin{definition}
\begin{enumerate}
    \item
    A \defhigh{computable approximation} is a computable Cauchy sequence, i.e., a computable sequence of rational numbers that converges.
    A real is \defhigh{computably approximable}, or \defhigh{c.a.}, if it is the limit of some computable approximation.
    \item
    A \defhigh{left-c.e.\ approximation} is a nondecreasing computable approximation. 
    A real is \defhigh{left-c.e.}\ if it is the limit of some left-c.e.\ approximation.
    \item
    A \defhigh{right-c.e.\ approximation} is a nonincreasing computable approximation. 
    A real is \defhigh{right-c.e.}\ if it is the limit of some right-c.e.\ approximation.
\end{enumerate}
\end{definition}
In particular, if $\alpha$ is a left-c.e.\ real with a left-c.e.\ approximation~$a_0,a_1,\dots$, then~$1-\alpha$ is right-c.e.\ real with a right-c.e.\ approximation~$1-a_0,1-a_1,\dots$ .
\begin{definition}
    The \defhigh{left cut} of a real $\alpha$, written~$LC(\alpha)$, is the set of all rationals strictly smaller than $\alpha$.
\end{definition}

\begin{definition}[Solovay~\cite{Solovay-1975}, 1975]\label{Solovay-reducibility}
    A real~$\alpha$ is \defhigh{Solovay reducible} to a real~$\beta$, written~$\alpha\redsolovay\beta$, if there exist a constant~$c\in\mathbb{R}$ and a partially computable function $g$ from the set~$\mathbb{Q}\cap[0,1)$ to itself such that, for all~$q< \beta$, the value $g(q)$ is defined and fulfills the inequality
    \begin{equation}\label{eq:Solovay-reducibility}
        0<\alpha - g(q)<c(\beta - q).
    \end{equation}
\end{definition}
We will refer to~\eqref{eq:Solovay-reducibility} as \defhigh{Solovay condition} and to~$c$ as \defhigh{Solovay constant}. 

We say that the function $g$ \defhigh{witnesses} the Solovay reducibility of $\alpha$ to $\beta$ in case~$g$ satisfies the conditions in Definition~\ref{Solovay-reducibility} for some Solovay constant. Note that such a function $g$ is defined on the whole set~$LC(\beta)$, maps it to $LC(\alpha)$, and fulfills
\begin{equation}\label{eq:translation-function}
    \lim\limits_{q\nearrow\beta} g(q) = \alpha,
\end{equation}
where $\lim\limits_{q\nearrow\beta}$ denotes the left limit.

The set of left-c.e.\ reals is downwards closed under Solovay reducibility. This is usually shown using the index characterization of Solovay reducibility on the set of left-c.e.\ reals by Calude, Hertling, Khoussainov, and Wang~\cite{Calude-etal-1998}, whereas the following proof uses the original definition.
\begin{proposition}\label{closure-of-left-ce}
    If a real $\beta$ is left-c.e., and a real $\alpha$ is Solovay reducible to $\beta$, then $\alpha$ is left-c.e. 
\end{proposition}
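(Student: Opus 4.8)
The plan is to construct a left-c.e.\ approximation for $\alpha$ directly from a left-c.e.\ approximation $b_0, b_1, \dots$ for $\beta$ together with a function $g$ witnessing $\alpha \redsolovay \beta$ with Solovay constant $c$. The key observation is that each $b_n$ is a rational strictly below $\beta$, so $g(b_n)$ is defined and satisfies $0 < \alpha - g(b_n) < c(\beta - b_n)$ by~\eqref{eq:Solovay-reducibility}. Since $b_n \nearrow \beta$, the right-hand side $c(\beta - b_n)$ tends to $0$, so $g(b_n) \to \alpha$ by the squeeze argument already recorded in~\eqref{eq:translation-function}. Thus $g(b_0), g(b_1), \dots$ is a computable sequence of rationals converging to $\alpha$ from below; the only defect is that it need not be nondecreasing.

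To repair monotonicity, I would pass to running maxima: define $a_n = \max\{g(b_0), g(b_1), \dots, g(b_n)\}$. This sequence is computable (a finite max of computable values at each stage), nondecreasing by construction, and bounded above by $\alpha$ since every $g(b_i) < \alpha$. It still converges to $\alpha$: given $\varepsilon > 0$, pick $N$ with $g(b_N) > \alpha - \varepsilon$, and then $a_n \in (\alpha - \varepsilon, \alpha)$ for all $n \ge N$. Hence $a_0, a_1, \dots$ is a left-c.e.\ approximation of $\alpha$, so $\alpha$ is left-c.e.

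There is essentially no serious obstacle here; the argument is short. The one point that deserves a sentence of care is the assumption that the approximations are rational-valued and lie in $[0,1)$, which the paper has fixed as a global convention, so that $g$ is genuinely defined on each $b_n$; and one should note that the left-c.e.\ approximation of $\beta$ can be taken to be strictly increasing or at least to have $b_n < \beta$ for every $n$ (which holds automatically for any left-c.e.\ approximation, since if some $b_n = \beta$ then $\beta$ would be rational and a trivial approximation exists anyway, or one simply observes $b_n \le \beta$ suffices because $g$ is required to be defined only on $LC(\beta)$ — if $b_n = \beta$ for some $n$ then $\beta$ is rational, hence $\alpha = \lim g(b_m)$ is still obtained from the strictly-below terms). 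I would handle this by remarking at the outset that we may assume $b_n < \beta$ for all $n$, which is the standard normalization.
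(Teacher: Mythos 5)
Your proposal is correct and follows essentially the same route as the paper: evaluate $g$ along a left-c.e.\ approximation of $\beta$, use the Solovay condition to squeeze $g(b_n)$ to $\alpha$ from below, and pass to running maxima to restore monotonicity. The extra normalization $b_n<\beta$ that you discuss is a reasonable point of care (the paper leaves it implicit), but it does not change the argument.
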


\begin{proof}
Let $\beta$ be a left-c.e.\ real, and let $\alpha\redsolovay\beta$ with Solovay constant~$c$ witnessed by some partial function~$g$. Fix some left-c.e.\ approximation $b_0,b_1,\ldots$ of $\beta$, and define a computable sequence of rationals $a_0,a_1,\ldots$ by setting $a_n = g(b_n)$ for all~$n$. By assumption on~$g$, we obtain for every~$n$ the inequalities~$a_n < \alpha$ and
\begin{equation}
    0 < \alpha - a_n < c(\beta - b_n).
\end{equation}
By~$\lim\limits_{n\to\infty}b_n = \beta$, these inequalities imply that~$\lim\limits_{n\to\infty} a_n = \alpha$, hence~$a_0,a_1,\dots$ is a computable approximation of~$\alpha$.
Finally, setting $a'_n:=\max\{a_m : m\leq n\}$ for every $n$ provides a left-c.e\ approximation~$a'_0,a'_1,\ldots$ of~$\alpha$, hence $\alpha$ is left-c.e. 
\end{proof}
\medskip

Since the Solovay condition~\eqref{eq:Solovay-reducibility} is required only for rationals $q$ in the left cut of $\beta$, many researchers focused on Solovay reducibility as a measure of relative randomness of left-c.e.\ reals, while, outside of the left-c.e.\ reals, the notion has been considered  as "badly behaved" by several authors (see e.g.\ Downey and Hirschfeldt~\cite[Section~9.1]{Downey-Hirschfeldt-2010}).

Zheng and Rettinger~\cite{Zheng-Rettinger-2004} introduced  the reducibility $\redsolovayzweia$ on the set of c.a.\ reals, which is equivalent to $\redsolovay$ on the set of left-c.e.\ reals and, nowadays, is just called Solovay reducibility by some authors, even though it differs from Solovay reducibility as introduced in Definition~\ref{Solovay-reducibility} on the set of c.a.\ reals.

\begin{definition}[Zheng, Rettinger~\cite{Zheng-Rettinger-2004}, 2004]
\label{S2a-reducibility}
    A c.a.\ real $\alpha$ is {S2a}-\defhigh{reducible} to a c.a.\ real $\beta$, written $\alpha \redsolovayzweia \beta$, if there exist a constant $c\in\mathbb{R}$ and computable approximations $a_0,a_1,\ldots$ and~$b_0,b_1,\ldots$ of~$\alpha$ and~$\beta$, respectively, that fulfill for every $n\in\mathbb{N}$ the inequality
    \begin{equation}\label{eq: 2a-property}
        |\alpha - a_n| \leq c (|\beta - b_n| + 2^{-n}).
    \end{equation}
\end{definition}
We  will refer to~\eqref{eq: 2a-property} as~{S2a}-\defhigh{condition} and to~$c$ as~{S2a}-\defhigh{constant} and say that the sequences~$a_0,a_1,\dots$ and~$b_0,b_1,\dots$ \defhigh{witness} the {S2a}-reducibility of~$\alpha$ to~$\beta$ in case they satisfy the conditions of Definition~\ref{S2a-reducibility} for some {S2a}-constant.

In the next section, we will show that on the set of c.a.\ reals, Solovay reducibility implies {S2a}-reducibility but not vice versa.
 
\section{The theorem}\label{m-implies-2a}

\begin{theorem}\label{the-theorem}
    Let~$\alpha$ and~$\beta$ be reals where~$\beta$ is computably approximable and $\alpha$ is Solovay reducible to~$\beta$ with Solovay constant $c$. Then $\alpha$ is computably approximable as well, and $\alpha$ is {S2a}-reducible to $\beta$ with {S2a}-constant $c$.
%
\end{theorem}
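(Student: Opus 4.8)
The plan splits according to whether $\beta$ is left-c.e. The easy observation is that if one can produce, computably and uniformly in~$n$, rationals $q_n<\beta$ with $q_n\to\beta$, then taking a partial computable~$g$ witnessing $\alpha\redsolovay\beta$ with constant~$c$ (so $g$ is total on $LC(\beta)$, $0<\alpha-g(q)<c(\beta-q)$ for every rational $q<\beta$, and $g(q)\to\alpha$ as $q\nearrow\beta$ by~\eqref{eq:translation-function}) and setting $b_n:=q_n$, $a_n:=g(q_n)$ does everything at once: $(a_n)$, $(b_n)$ are computable, $b_n\to\beta$, $a_n=g(q_n)\to\alpha$ (so $\alpha$ is c.a.), and the {S2a}-condition holds with constant~$c$, since
\begin{equation*}
  |\alpha-a_n| \;=\; \alpha-g(q_n) \;<\; c(\beta-q_n) \;=\; c\,|\beta-b_n| \;\le\; c\bigl(|\beta-b_n|+2^{-n}\bigr).
\end{equation*}
When~$\beta$ is left-c.e.\ the required rationals are at hand --- take a nondecreasing computable approximation of~$\beta$, which stays strictly below~$\beta$ unless~$\beta$ is rational (a trivial case) --- and indeed Proposition~\ref{closure-of-left-ce} already gives that $\alpha$ is left-c.e.\ in this situation.

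For a general c.a.\ real~$\beta$ this shortcut is unavailable, because $LC(\beta)$ need not be c.e.\ --- in fact a computable sequence of rationals lying below~$\beta$ and converging to~$\beta$ exists precisely when~$\beta$ is left-c.e. The approach I would take is to fix an arbitrary computable approximation $\beta_0,\beta_1,\dots$ of~$\beta$, set $b_n:=\beta_n$, and build $a_n$ by running~$g$ on a running minimum $\min\{\beta_t:n\le t\le m\}$ of the approximation for a suitably chosen stage $m=m(n)$: the point is that $\inf_{t\ge n}\beta_t$ increases to~$\beta$, so for~$m$ large the running minimum eventually lies below~$\beta$ (unless~$\beta$ happens to be right-c.e.), and one can then try to arrange $g(\min\{\beta_t:n\le t\le m(n)\})$ to be within $c(|\beta-\beta_n|+2^{-n})$ of~$\alpha$.

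The main obstacle, and where the real content of the theorem lies, is making this work in general. Two difficulties stand out. First, there is no finite certificate that a given rational is below~$\beta$, so choosing the stage $m(n)$ correctly and showing the search terminates forces a careful revision argument, after which one must still verify that the resulting $(a_n)$, $(b_n)$ are honestly computable (not merely limit-computable). Second, when~$\beta$ is right-c.e.\ but not left-c.e.\ the running minimum need never drop below~$\beta$, so this case must be treated separately --- one would instead exploit that a nonincreasing approximation of~$\beta$ supplies arbitrarily good upper bounds on~$\beta$, and use those to bound the {S2a}-error through the term $|\beta-b_n|$. I expect this right-c.e.\ case to be the most delicate part of the argument.
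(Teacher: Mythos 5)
Your treatment of the left-c.e.\ case is correct and coincides with the paper's (which invokes Proposition~\ref{closure-of-left-ce} and the known equivalence of $\redsolovay$ and $\redsolovayzweia$ on left-c.e.\ reals). But for general c.a.\ $\beta$ your text is a plan, not a proof, and the plan as stated does not go through. You correctly identify the central obstruction --- there is no finite certificate that a given rational lies in $LC(\beta)$, and a computable sequence of rationals below $\beta$ converging to $\beta$ exists only when $\beta$ is left-c.e.\ --- but the running-minimum device does not circumvent it: you have no effective way to decide when $\min\{\beta_t: n\le t\le m\}$ has dropped below $\beta$ (so the stage $m(n)$ is not computable), $g$ may simply diverge on rationals $\ge\beta$ so you cannot even safely evaluate it on the running minimum, and in the right-c.e.\ case the running minimum never falls below $\beta$ at all. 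You flag both difficulties yourself and leave them unresolved, so the theorem is not established for the only case where it has content.

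The missing idea, which is the heart of the paper's proof, is to replace the undecidable condition \say{$q<\beta$} by a finitely \emph{verifiable} (c.e.) requirement that can be searched for and that suffices for the bound whether or not the rationals found are actually below $\beta$. Concretely, at step $n$ one searches for a chain $0=q_{m_0}<\dots<q_{m_\ell}$ of rationals in $\operatorname{dom}(g)$ with consecutive gaps below $2^{-n-1}$, with $q_{m_\ell}$ within $2^{-n-1}$ of some later term $b_i$ of the given approximation of $\beta$, and whose $g$-values satisfy the \emph{observable} approximate Solovay inequalities $0<g(q_{m_\ell})-g(q_{m_k})<c(q_{m_\ell}-q_{m_k}+2^{-n-2})$; one then sets $a_n=g(q_{m_\ell})$ and $b_{i_n}=b_i$. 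Existence of such a chain (hence termination of the search) follows from density of $\operatorname{dom}(g)$ in $[0,\beta]$, the Solovay condition~\eqref{eq:Solovay-reducibility}, and the limit property~\eqref{eq:translation-function}; and the verification splits on where $\beta$ sits relative to the chain: if $q_{m_\ell}<\beta$ the Solovay condition applies directly, while if $\beta\in(q_{m_k},q_{m_{k+1}}]$ for some $k<\ell$ the chain inequalities yield both an upper and a lower bound on $\alpha-g(q_{m_\ell})$ of the required size $c(|\beta-b_{i_n}|+2^{-n})$. Without some such verifiable surrogate for membership in $LC(\beta)$, your construction cannot be made computable, so the gap is essential rather than cosmetic.
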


\begin{proof}
Let the partially computable function~$g$ witness that $\alpha$ is Solovay reducible to $\beta$ with Solovay constant $c$. 

%
In case $\beta$ is left-c.e., we directly obtain by Proposition~\ref{closure-of-left-ce} that $\alpha$ is left-c.e., as well, hence is computably approximable, and that thus we have~$\alpha\redsolovayzweia\beta$ with constant $c$ since $\redsolovay$ and $\redsolovayzweia$ coincide with preserving the constant $c$ on the set of left-c.e.\ reals by Zheng and Rettinger~\cite[Theorem~3.2(2)]{Zheng-Rettinger-2004}. So, in what follows, assume that $\beta$ is not left-c.e. In particular, $\beta\neq 0$.

Let $b_0,b_1,\ldots$ and~$q_0,q_1,\ldots$ be a computable approximation of $\beta$ and a computable enumeration of $\mathrm{dom}(g)$, respectively, where, without loss of generality, we may assume that~$b_0 = q_0 = 0$ since, by~$\beta>0$, $g(0)$ is defined.

Then, it suffices to inductively construct an infinite strictly increasing computable index sequence $i_0,i_1,\ldots$ and a computable sequence $a_0,a_1,\ldots$ of rationals that fulfill the inequality
\begin{equation}\label{eq:2a-to-prove}
    |\alpha - a_n| < c(|\beta - b_{i_n}| + 2^{-n})
\end{equation}
for every $n$. Note that the sequence~$b_0,b_1,\dots$ and its subsequence~$b_{i_0},b_{i_1},\dots$ have the same limit~$\beta$, hence by~\eqref{eq:2a-to-prove} the sequence $a_0,a_1,\dots$ converges to $\alpha$.

\paragraph{Construction of the sequences}

At step $0$, we set 
\[i_0 = 0 \makebox[5em]{and}a_0 = g(q_{i_0}) = g(q_0) = g(0).\]

\medskip
At step $n>0$, we assume that 
$i_{n-1}$ has already been defined. 
We say that a rational~$b$ \defhigh{satisfies requirement} $R_n$, if there exist a natural number~$\ell\geq 2$ and indices~$m_0,\dots,m_{\ell}$ such that the values~$g(q_{m_{0}}), \ldots, g(q_{m_{\ell}})$ are all defined and it holds that
\begin{align}
        \label{eq:m-l-sandwiched}
        b - 2^{-n-1} < q_{m_{\ell}} < b,  \quad & \\
       \label{eq:sorted-ms}
        0 = q_{m_0} < \ldots < q_{m_{\ell}},  \quad & \\
        \label{eq:distance}
        q_{m_{k+1}} - q_{m_{k}} < 2^{-n-1} \quad &
        \text{for all } k \in \{0, \ldots, \ell-1\},\\
        \label{eq:m-implies-2a-proof-g-distances}
        0 < g(q_{m_{\ell}}) - g(q_{m_k}) < c ( q_{m_{\ell}} - q_{m_{k}}+ 2^{-n-2})
        \quad &
        \text{for all } k \in \{0, \ldots, \ell-1\}.
\end{align}
At step~$n$ we search in parallel for~$i>i_{n-1}$ such that~$b=b_i$ satisfies requirement~$R_n$, and for such~$i$ that is found first, we let~$i_n = i$ and $a_n = g(q_{m_{\ell}})$ for the corresponding values of~$\ell$ and~$m_{\ell}$.

\paragraph{Every construction step terminates}

Fix $n>0$. We demonstrate that there is some $i>i_{n-1}$ such that~$b_i$ satisfies requirement~$R_n$, hence step~$n$ will be completed successfully.

By definition of {S2a}-reducibility, the partial function~$g$ is defined on all rationals~$q< \beta$, thus its domain is a dense subset of the real interval~$[0,\beta]$, which has nonzero length because of~$\beta\neq 0$.
Therefore, we can fix a natural number~$\ell\ge 2$ and indices $m_0,\ldots,m_{\ell-1}$ such that the values~$g(q_{m_{0}}), \ldots, g(q_{m_{\ell-1}})$ are all defined,~\eqref{eq:sorted-ms} and~\eqref{eq:distance} hold with the conditions on~$m_{\ell}$ removed, and we have
\begin{equation}\label{eq:m_l-with-beta}
\beta - 2^{-n-2} < q_{m_{\ell-1}} < \beta.
\end{equation}
Next, let
\begin{equation}\label{eq:A<alpha}
A := \max\limits_{k\in\{0,\ldots,\ell-1\}} g(q_{m_k}).
\end{equation}
By choice of~$g$ and~\eqref{eq:translation-function}, we have~$A < \alpha$ and $\lim\limits_{q\nearrow\beta}g(q) = \alpha$. Consequently, first, there exists a real $\varepsilon>0$ such that
\[g(q) \in (A,\alpha) \quad \text{ for all } q\in (\beta - \varepsilon, \beta).\]
Second, we can fix an index $m_{\ell} > \max\{m_0,\ldots,m_{\ell-1}\}$ such that
\begin{equation}\label{eq:three-policemen}
\max\{q_{m_{\ell-1}}, \beta -\varepsilon
\}
< q_{m_{\ell}} <\beta.
\end{equation}
Then~\eqref{eq:sorted-ms} and~\eqref{eq:distance} hold, and the inequalities
~\eqref{eq:m_l-with-beta} and~\eqref{eq:three-policemen} imply
\begin{equation}\label{eq:two-policemen}
    \beta - 2^{-n-2} < \beta - \varepsilon < q_{m_{\ell}} < \beta.
\end{equation}

In order to show~\eqref{eq:m-implies-2a-proof-g-distances}, fix~$k \in\{0, \ldots, \ell-1\}$. The first inequality in~\eqref{eq:m-implies-2a-proof-g-distances} holds because, by choice of~$\varepsilon$ and of~$m_{\ell}$, we have~$g(q_{m_k}) < g(q_{m_{\ell}})$. The second one holds because we have
\begin{align*}
g(q_{m_{\ell}}) - g(q_{m_k}) 
&< \alpha - g(q_{m_k})    
< c(\beta - q_{m_k})\\
&= c(\beta - q_{m_{\ell}}) + c(q_{m_{\ell}} - q_{m_k}) 
< c(2^{-n-2} + q_{m_{\ell}} - q_{m_k}),
\end{align*}
where the first inequality holds by the left part of~\eqref{eq:Solovay-reducibility} for~$q_{m_{\ell}}<\beta$,
the second one holds by the right part of~\eqref{eq:Solovay-reducibility} for~$q_{m_k}<\beta$,
and the third one by~\eqref{eq:two-policemen}.


Furthermore,~\eqref{eq:two-policemen} implies that all~$b$ that are close enough to~$\beta$ fulfill~\eqref{eq:m-l-sandwiched}. Now the~$b_i$ converge to~$\beta$, hence, for almost all~$i$, the value~$b = b_i$ fulfills~\eqref{eq:m-l-sandwiched}. In particular, step~$n$ will complete successfully after finding~$i>i_{n-1}$ as required.
   
\paragraph{The constructed sequences witness the {S2a}-reducibility}
In order to obtain $\alpha\redsolovayzweia\beta$ with the {S2a}-constant $c$, it suffices to prove for every $n$ the {S2a}-condition
\begin{equation}\label{S2a-property-to-prove-neu}
    |\alpha - a_n| < c(|\beta - b_{i_n}|+2^{-n}).
\end{equation}
So fix~$n$. Let~$\ell$ and~$m_0,\ldots,m_{\ell}$ be the witnesses found at step~$n$ for the fact that~$b_{i_n}$ satisfies requirement $R_n$. Thus, in particular, we have 
\begin{equation}\label{eq:define-a_n}
    a_n = g(q_{m_{\ell}}).
\end{equation}
We prove~\eqref{S2a-property-to-prove-neu} by distinguishing cases according to the position of~$\beta$ with respect to the values~$0 = q_{m_0} < \cdots < q_{m_{\ell}}$.
\begin{itemize}
    \item 
    In case~$q_{m_{\ell}}<\beta$, we have
    \[0 < \alpha - g(q_{m_{\ell}}) < c(\beta - q_{m_{\ell}}) \le c(|\beta - b_{i_n}| + |b_{i_n} - q_{m_{\ell}}|) < c(|\beta - b_{i_n}| + 2^{-n-1}),\]
    where the first two inequalities follow from the Solovay condition, and the last one holds by~\eqref{eq:m-l-sandwiched}.


    \item 
    In case there exists~$k\in\{0,\ldots,\ell-1\}$ such that $q_{m_k} < \beta \leq q_{m_{k+1}}$, on one hand, we have
    \begin{equation*}\label{eq:bound-1}
        \alpha - g(q_{m_{\ell}}) \leq \alpha - g(q_{m_k}) < c(\beta - q_{m_k}) < c\cdot 2^{-n-1} < (|\beta - b_{i_n}| + 2^{-n}),
    \end{equation*}
    where the first three inequalities follow, from left to right, by the lower bound in~\eqref{eq:m-implies-2a-proof-g-distances}, by the Solovay condition, and, finally, because~$\beta$ is contained in the interval~$(q_{m_k},q_{m_{k+1}}]$, which has length strictly less than~$2^{-n-1}$.

    On the other hand, we also obtain an upper bound for~$g(q_{m_{\ell}}) - \alpha$:
    \begin{align*}
        g(q_{m_{\ell}}) - \alpha  
        &< g\big(q_{m_{\ell}}) - g(q_{m_k}\big)
        \\
        & \leq c\big(q_{m_{\ell}} - q_{m_k} + 2^{-n-1}\big)
        \\
        &\le c\big( q_{m_{\ell}} - \beta + 2^{-n-1} + 2^{-n-1}\big)
        \\
        &< c\big( b_{i_n} - \beta + 2^{-n}\big) \le c( |b_{i_n} - \beta| + 2^{-n}),
    \end{align*}
    where the first four inequalities follow, from top to bottom,
    (i) by~$q_{m_k} < \beta$ and the choice of~$g$,
    (ii) by~\eqref{eq:m-implies-2a-proof-g-distances},
    (iii) because, by case assumption,~$\beta$ differs from~$q_{m_{k}}$ by at most~$2^{-n-1}$,
    and (iv) by~$q_{m_{\ell}} < b_{i_n}$.

    The upper bounds for~$\alpha - g(q_{m_{\ell}})$ and~$g(q_{m_{\ell}}) - \alpha$ imply together that
    \[|\alpha - g(q_{m_{\ell}})| < c(|\beta - b_{i_n}| + 2^{-n}).\]
\end{itemize}
The cases above are exhaustive with respect to the possible positions of~$\beta$, hence, by~$a_n = g(q_{m_{\ell}})$, inequality~\eqref{S2a-property-to-prove-neu} follows. This completes the proof of Theorem~\ref{the-theorem}.
\end{proof}

We conclude by observing that the converse of Theorem~\ref{the-theorem} fails, i.e., that there are c.a.\ reals $\alpha$ and $\beta$ such that~$\beta\redsolovayzweia\alpha$ but ${\beta\nleq_{\mathrm{S}}\alpha}$. In fact, $\alpha$ can be chosen to be left-c.e.

\begin{proposition}
Let~$\alpha$ be a real that is left-c.e.\ but not right-c.e. Then it holds that \[1-\alpha\redsolovayzweia\alpha \makebox[5em]{and}1-\alpha\nleq_{\mathrm{S}}\alpha.\]
\end{proposition}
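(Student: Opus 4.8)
The plan is to prove the two claims separately: first the positive reduction $1-\alpha\redsolovayzweia\alpha$, then the non-reduction $1-\alpha\nleq_{\mathrm{S}}\alpha$.

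For the positive direction, fix a left-c.e.\ approximation $a_0,a_1,\dots$ of $\alpha$. I would then take $b_n := 1-a_n$ as the approximation of $1-\alpha$ (noting $b_0,b_1,\dots$ converges to $1-\alpha$), and use $a_0,a_1,\dots$ itself as the approximation of the "denominator" real $\alpha$. With these choices we have, for every $n$,
\[
\bigl|(1-\alpha) - b_n\bigr| = \bigl|(1-\alpha)-(1-a_n)\bigr| = |\alpha - a_n|,
\]
so the S2a-condition $|(1-\alpha)-b_n| \le c(|\alpha - a_n| + 2^{-n})$ holds trivially with constant $c=1$ (indeed with $2^{-n}$ to spare). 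Hence $1-\alpha\redsolovayzweia\alpha$ with S2a-constant $1$. This part is essentially immediate from the remark in the introduction that $1-\alpha$ is right-c.e.\ whenever $\alpha$ is left-c.e.

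For the non-reduction, I would argue by contradiction: suppose $1-\alpha\redsolovay\alpha$, witnessed by a partially computable $g$ with Solovay constant $c$, so that $g$ is total on $LC(\alpha)$, maps it into $LC(1-\alpha)$, and satisfies $0 < (1-\alpha) - g(q) < c(\alpha - q)$ for all $q<\alpha$. The idea is to extract a right-c.e.\ approximation of $\alpha$ from this data, contradicting the hypothesis that $\alpha$ is not right-c.e. Concretely, from $0 < (1-\alpha) - g(q)$ we get $g(q) < 1-\alpha$, i.e.\ $1 - g(q) > \alpha$, so $1-g(q)$ is an upper bound for $\alpha$ for every $q\in LC(\alpha)$; and from $(1-\alpha)-g(q) < c(\alpha-q)$ we get $1 - g(q) < \alpha + c(\alpha - q) = (1+c)\alpha - cq$, which tends to $\alpha$ as $q\nearrow\alpha$. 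Now fix any computable approximation $\alpha_0,\alpha_1,\dots$ of $\alpha$ (it exists since $\alpha$ is left-c.e., hence c.a.) and a computable enumeration $q_0,q_1,\dots$ of $\mathrm{dom}(g)\supseteq LC(\alpha)$; define $d_n := 1 - g(q_{j(n)})$ where $j(n)$ is chosen computably so that $q_{j(n)}$ is (say) within $2^{-n}$ of $\alpha$ from below — such an index can be found effectively because the $q_j$ enumerate a dense subset of $LC(\alpha)$ and we have the converging approximation $\alpha_n$ to guide the search. Each $d_n$ exceeds $\alpha$, and $d_n \to \alpha$, so $d_n$ is a computable approximation of $\alpha$ from above; passing to running minima $d'_n := \min\{d_m : m\le n\}$ yields a right-c.e.\ approximation of $\alpha$, contradicting the assumption that $\alpha$ is not right-c.e.

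The main obstacle is the effectivity of the index selection $j(n)$ in the non-reduction argument: one must ensure that, using only the computable data $g$, the enumeration $q_0,q_1,\dots$, and a computable approximation of $\alpha$, one can computably produce rationals $q_{j(n)} < \alpha$ converging to $\alpha$. The point is that "$q < \alpha$" is a $\Sigma^0_1$ condition witnessed by the left-c.e.\ approximation of $\alpha$, so one can dovetail: search through the $q_j$ and the stages of $\alpha_0,\alpha_1,\dots$ in parallel until a pair $(q_j,s)$ is found with $\alpha_s - 2^{-s} > q_j$ and $q_j > \alpha_s - 2^{-n-1}$ — the first guarantees $q_j<\alpha$ and hence $g(q_j)$ is defined, while density plus convergence guarantee such a pair exists, so the search terminates. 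This makes the whole construction computable and completes the contradiction.
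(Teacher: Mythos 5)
Your first half is exactly the paper's argument: take a left-c.e.\ approximation $a_0,a_1,\dots$ of $\alpha$, use $1-a_n$ as the approximation of $1-\alpha$ and $a_n$ as the approximation of $\alpha$, and note that the two error terms coincide, giving the {S2a}-condition with constant $1$. For the second half the paper is much shorter: since $\alpha$ is not right-c.e., $1-\alpha$ is not left-c.e., and then $1-\alpha\redsolovay\alpha$ is impossible by Proposition~\ref{closure-of-left-ce}, because $\alpha$ is left-c.e.\ and the left-c.e.\ reals are closed downwards under $\redsolovay$. Your contradiction argument is precisely that proposition unwound in mirror image (pulling a monotone approximation of $\alpha$ through $g$ and reflecting it), so the idea is sound, but re-deriving it by hand costs you effectivity bookkeeping, and that bookkeeping as written has a genuine flaw.

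The flaw is in the selection of $j(n)$. Your accepted pair $(q_j,s)$ satisfies $q_j>\alpha_s-2^{-n-1}$, but a left-c.e.\ approximation carries no modulus of convergence, so the first accepted stage $s$ may have $\alpha_s$ still far below $\alpha$; then $q_{j(n)}$ is close to $\alpha_s$ but not to $\alpha$, so $q_{j(n)}$ need not converge to $\alpha$ and consequently $d_n=1-g(q_{j(n)})$ need not converge to $\alpha$ (only $d_n>\alpha$ is guaranteed, so the running minima could converge to some $\delta>\alpha$). Relatedly, the certificate $\alpha_s-2^{-s}>q_j$ only guarantees $q_j<\alpha$ when $\alpha_0,\alpha_1,\dots$ is the \emph{left-c.e.} approximation (so that $\alpha_s\le\alpha$), not an arbitrary computable approximation as you first propose. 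Both problems disappear if you drop the search entirely: each term $a_n$ of the left-c.e.\ approximation satisfies $a_n<\alpha$ (strict, since a rational $\alpha$ would be right-c.e.), hence lies in $\mathrm{dom}(g)$, and $d_n:=1-g(a_n)$ is a computable sequence with $\alpha<d_n<\alpha+c(\alpha-a_n)\to\alpha$; running minima then give a right-c.e.\ approximation of $\alpha$, the desired contradiction. Cleaner still is to quote Proposition~\ref{closure-of-left-ce} directly, as the paper does.
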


\begin{proof}
The real~$\alpha$ is not right-c.e., hence $1-\alpha$ is not left-c.e.
This implies that ${1-\alpha\nleq_\mathrm{S}\alpha}$ since the set of left-c.e.\ reals is closed downwards under Solovay reducibility by Proposition~\ref{closure-of-left-ce}.

On the other hand, for a left-c.e.\ approximation $a_0,a_1,\ldots$ of $\alpha$, the sequence $1-a_0,1-a_1,\ldots$ is a right-c.e.\ approximation of $1-\alpha$.
The computable approximations $1-a_0,1-a_1,\ldots$ and $a_0,a_1,\ldots$ obviously fulfill the~{S2a}-condition~\eqref{eq: 2a-property} with constant $c=1$, hence~$1-\alpha\redsolovayzweia \alpha$.
\end{proof}

\end{document}